\documentclass[11pt]{amsart}
\usepackage{amsmath}
\usepackage{amssymb}
\usepackage{amsfonts}
\usepackage{graphicx}
\usepackage{mathtools}
\usepackage{xcolor}
\usepackage{nameref}
\usepackage{hyperref}
\DeclareSymbolFont{AMSb}{U}{msb}{m}{n}
\DeclareMathSymbol{\I}{\mathbin}{AMSb}{"49}
\DeclareMathSymbol{\C}{\mathbin}{AMSb}{"43}

\newcommand{\FS}{\operatorname{FS}}
\newtheorem{theorem}{Theorem}[section]
\newtheorem{lemma}[theorem]{Lemma}

\theoremstyle{definition}
\newtheorem{definition}[theorem]{Definition}

\theoremstyle{corollary}
\newtheorem{corollary}[theorem]{Corollary}
\theoremstyle{example}

\theoremstyle{note}

\theoremstyle{notation}

\numberwithin{equation}{section}
\newtheorem*{theorem*}{Theorem}
\theoremstyle{question}
\newtheorem{question}[theorem]{Question}

\begin{document}
\title{Monochromatic Sums and Quotients Near Zero}

\author{Md Moid Shaikh}
\address{Md Moid Shaikh, Department of Mathematics, Maharaja Manindra Chandra College, 20 Ramkanto
Bose Street, Kolkata-700003, West Bengal, India}
\email{mdmoidshaikh@gmail.com\\
mdmoidshaikh@mmccollege.ac.in}

\author{Sourav Kanti Patra}
\address{Sourav Kanti Patra, Department of Mathematics, Kishori Sinha Mahila College, Q976+WXP, Aurangabad, Bihar 824101, India}
\email{souravkantipatra@gmail.com}

\author{Mukesh Kumar}
\address{Mukesh Kumar, Department of Mathematics, Magadh University, Bodh Gaya, Aurangabad, Bihar 824101, India}
\email{mukeshmaths95@gmail.com}

%\thanks{}

\keywords{Algebra in the Stone-\v{C}ech compactification, Dense subsemigroup, HL semigroups, IP sets near zero, Idempotent}

\begin {abstract}
Recently S. Goswami proved that whenever the set $\mathbb N$ of natural numbers is finitely colored, the set $\{a, b, ab, b(a+1)\}$ is monochromatic which also  established a variant of the long-standing Hindman’s conjecture, which asks for a monochromatic set of the form $\{a, b, ab, a+b\}$. Actually he disproved a conjecture proposed by J. Sahasrabudhe that $\{a, b, a(b + 1)\}$ is not partition regular. In this paper we prove that  $\{a, b, ab, b(a+1)\}$ is monochromatic near zero which means for every finite coloring of a dense subsemigroups of $((0, \infty), +)$,  the set $\{a, b, ab, b(a+1)\}$ is monochromatic near zero or in other words, we will get $a, b$ in a dense subsemigroups of $((0, \infty), +)$ as small as we want such that the set $\{a, b, ab, b(a+1)\}$ is monochromatic for every finite coloring of that dense subsemigroups of $((0, \infty), +)$, also we show that the pattern $x, y, x+y, \frac{y}{x}$ is partition regular near zero.

AMS subjclass [2020]: 05D10.
\end{abstract}

\maketitle

\section{Introduction}
A typical result in Ramsey Theory asserts that a given configuration
will be completely contained in one of the partition classes for
any finite partition of some sufficiently  large or rich structure or in the language
of graph theory, Ramsey Theory asserts that a given configuration
will be monochromatic for any finite coloring of some sufficiently  large or rich structure. One can
say that Ramsey Theory is the study of the preservation of configuration under
set partitions, i.e., Ramsey Theoretic properties are partition regular.

In 1916, I. Schur \cite{sch} proved that there exist $a, b\in \mathbb N$ such that $\{a, b, a+b\}$
is partition regular and it is easy to prove that $\{a, b, a\cdot b\}$ is partition regular by the
mapping $n \longmapsto 2^n$. These two results are
commonly referred to as the additive Schur theorem and the multiplicative Schur theorem, respectively.
It was unknown for more than a century  whether these two phenomena could be unified in
a single statement. In 1979, R. L. Graham and N. Hindman independently proved  such a result
for two-colorings (see \cite[pp. 68–69]{gra}, \cite[Section 4]{hind79}). Consequently, Hindman conjectured that
the statement should hold for any finite colorings which is now known as the Hindman conjecture. In 1917, J. Moreira \cite{mor2017} answered this conjecture partially that there exist $a, b\in \mathbb N$ such that $\{a, a\cdot b, a+b\}$
is partition regular. For the concise proof, one can see \cite{ryan}. In 2017, a conjecture proposed by J. Sahasrabudhe that $\{a, b, a(b + 1)\}$ is not partition regular(see \cite[Question 31.]{jul}). Recently, S. Goswami  \cite{sayan25}
disproved the conjecture of Sahasrabudhe, in fact,  Goswami  establish a stronger result which says that  the larger set $\{a, b, ab, b(a+1)\}$ can always be found monochromatic for any finite coloring of $\mathbb N$. In \cite{naso}, the authors discussed the partition regularity of the configuration $x, y, x+y, \frac{y}{x}$ in $\mathbb N$.

In 1999, Hindman and Leader first brought the concept of the semigroup consisting of ultrafilters on $(0, 1)$ converging to zero and they studied some strong results in Ramsey Theory in near zero, one can see \cite{hindult} for details. Then many results in near zero have been obtained in \cite{mine near 0, bayat, de2012, sou20}. Motivated by these, in this paper, we want to establish Goswami's Theorem in zero, i.e., we want to establish that $\{a, b, ab, b(a+1)\}$
is monochromatic near zero for dense subsemigroups $S$ of $((0, \infty), +)$ which means  that for any finite partition of $S\cap(0,1)$, at least one partition cell contains $a, b, ab, b(a+1)$, also we show that the pattern $x, y, x+y, \frac{y}{x}$ is partition regular near zero. We have designed this paper in the following way: In section 2, we have presented basic results which are used to prove our main results. In Section 3, we have proved Goswami's Theorem near zero using the combinatorial arguments. In the final section, we have proved that  the pattern $x, y, x+y, \frac{y}{x}$ is partition regular near zero based on algebraic arguments which gives Goswami's Theorem near zero.

\section{Basic results}
In this section, we present  some preliminary concepts, definitions, theorems, conventions, and results which will be
used frequently later in this paper. We only give the references to where interested readers can find the detailed descriptions.

At first we give a brief introduction of  the Stone-\v{C}ech compactification of a discrete semigroup. Let $(S,\cdot)$  be any discrete semigroup then the Stone-\v{C}ech compactification $\beta S$ of the discrete
semigroup $(S,\cdot)$ is defined to be the set of all ultrafilters on $S$. The principal ultrafilters are identified with the
points of $S$. For a subset  $A$ of $S$, we denote $\bar{A}=\{ p\in \beta S : A\in p\}$. Then the set $\{ \bar{A} :
A\subseteq S\}$ forms a  clopen basis for a topology on $\beta S$. One can extend the operation $\cdot$ on $S$  to the
Stone-\v{C}ech compactification $\beta S$ of $S$ which makes  $(\beta S,\cdot)$ is a compact,  right topological
semigroup (meaning that for any $p\in \beta S$, the function $\rho_p:\beta S \rightarrow \beta S$ defined by
$\rho_p(q)=q\cdot p$ is continuous) with $S$ contained in its topological center (meaning that for any $x\in S$
the function $\lambda_x : \beta S \rightarrow \beta S$
defined by $\lambda_x (q)=x\cdot q$ is continuous). Let $p,q \in \beta S $, and
$A \subseteq S$, $A \in p\cdot q$ if and only if $\{ x \in S :x^{-1}A \in q\} \in p$, where
$x^{-1}A=\{ y \in S: x\cdot y \in A \}$.

A nonempty subset $I$ of a semigroup $(T,\cdot)$
is said to be a {\it left ideal} of $S$ if $T\cdot I \subseteq I$, a {\it right ideal} of $S$ if
$I\cdot T \subseteq I$, and a {\it two-sided ideal} (or simply an {\it ideal}) if it is both a
left and a right ideal.  A left ideal is {\it minimal} if it does not
contain any proper left ideal. In a similar way, one can define a minimal right ideal
and the smallest ideal. One must have the smallest two-sided ideal for any compact Hausdorff right topological semigroup $(T,\cdot)$.\\
$$\begin{array}{ccc}
K(T) & = & \bigcup\{L:L \text{ is a minimal left ideal of } T\} \\
& = & \,\,\,\,\,\bigcup\{R:R \text{ is a minimal right ideal of } T\}.
\end{array}$$\\
$L\cap R$ is a group for a given a minimal left ideal $L$ and a minimal right ideal $R$ and hence it   contains
an idempotent(an element $a\in S$
is said to be an {\it idempotent} if $a = a\cdot a$). An idempotent belonging to the smallest ideal is minimal and conversely. For more details readers may consult Section 2.2 of \cite{hindalg}. From now on we use $\mathcal{P}_{f}(X)$
to denote the set of all finite nonempty subsets of a set $X$ and $E(S)$ to denote the set of all idempotents of $S$.

Our desired results are derived for an arbitrary subsemigroup of $\mathbb R$ which is dense in $(0, \infty)$, for example, the set of positive rational numbers.
In \cite{hindult}, Hindman and Leader first introduced the semigroup consisting of ultrafilters
converging to zero.   Let $S$ be a dense subsemigroup of $((0, \infty), +)$, then one can define
$ 0^+(S)= \{p\in \beta S:$ for all $(\epsilon>0)((0, \epsilon)\cap S\in p)\}$. It is necessary to
take $S$ as a dense subsemigroup of $((0, \infty), +)$ otherwise $ 0^+(S)$ will be empty.
\begin{theorem} {\normalfont \cite[Lemma 13.29.(f)]{hindalg}} Let $S$ be a dense subsemigroup of $((0, \infty), +)$ such that $S\cap(0,1)$ is a
subsemigroup of $((0, 1), \cdot)$ and assume that for each $y\in S\cap(0,1)$ and each $x\in S$, $x/y$
and $yx\in S$. Then $ 0^+(S)$ is a two-sided ideal of $\beta(S\cap(0,1), \cdot)$.
\label{2.1}
\end{theorem}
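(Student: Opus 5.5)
We need to show that $0^+(S)$ is nonempty, that it is a subsemigroup of $(\beta(S\cap(0,1)),\cdot)$, and that for every $p\in 0^+(S)$ and $q\in\beta(S\cap(0,1))$ both $q\cdot p$ and $p\cdot q$ lie in $0^+(S)$. First I identify $0^+(S)$ with a subset of $\beta(S\cap(0,1))$. Each $p\in 0^+(S)$ contains $(0,1)\cap S$, so $p$ restricts to an ultrafilter on $S\cap(0,1)$. Conversely, an ultrafilter on $S\cap(0,1)$ extends to one on $S$. Nonemptiness follows because $S$ is dense in $(0,\infty)$: each $(0,\epsilon)\cap S$ is nonempty, so the family $\{(0,\epsilon)\cap S:\epsilon>0\}$ has the finite intersection property and lies in some ultrafilter. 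Since $0^+(S)$ is nonempty, it suffices to prove the two absorption properties, as these give closure under products as well.

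For the left ideal property, take $q\in\beta(S\cap(0,1))$, $p\in 0^+(S)$, and $\epsilon>0$. By the definition of the product, $(0,\epsilon)\cap S\in q\cdot p$ if and only if
\[
\{x\in S\cap(0,1): x^{-1}((0,\epsilon)\cap S)\in p\}\in q .
\]
For every $x\in S\cap(0,1)$ we have $x^{-1}((0,\epsilon)\cap S)\supseteq (0,\epsilon)\cap S\cap(0,1)$. Indeed, if $y<\epsilon$ and $y<1$, then $xy<y<\epsilon$, and $xy\in S\cap(0,1)$ because $S\cap(0,1)$ is a multiplicative semigroup. This superset lies in $p$, so the displayed set is all of $S\cap(0,1)$, which belongs to $q$. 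Hence $q\cdot p\in 0^+(S)$.

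For the right ideal property, take $p\in 0^+(S)$, $q$ arbitrary, and $\epsilon>0$. The only $x$ we need to handle are those in $(0,\epsilon)\cap S\cap(0,1)$, and this set is in $p$. For such an $x$, every $y\in S\cap(0,1)$ satisfies $xy<x<\epsilon$, so $x^{-1}((0,\epsilon)\cap S)=S\cap(0,1)\in q$. Therefore $\{x: x^{-1}((0,\epsilon)\cap S)\in q\}\supseteq(0,\epsilon)\cap S\cap(0,1)\in p$, which gives $p\cdot q\in 0^+(S)$.

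The main obstacle is bookkeeping rather than estimates. One has to be careful about which ambient space the ultrafilters live in: $p$ is an ultrafilter on $S$, while the product is computed in $\beta(S\cap(0,1))$. The step that needs care is the identification of $0^+(S)$ with a closed subset of $\beta(S\cap(0,1))$. The hypotheses that $x/y\in S$ and $yx\in S$ appear to matter mainly for compatibility in the source (\cite{hindalg}), for example for viewing $0^+(S)$ inside $\beta(S\cap(0,1))$. With the conventions here, the hypothesis actually used in the argument is multiplicative closure of $S\cap(0,1)$.
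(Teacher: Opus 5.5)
Your proof is correct. The paper gives no proof of this statement and simply cites Lemma 13.29(f) of Hindman--Strauss, and your direct verification of both absorption properties is the standard argument for it (essentially the original Hindman--Leader one). One small correction to your closing remark: identifying $0^+(S)$ with a subset of $\beta(S\cap(0,1))$ needs only that $S\cap(0,1)\in p$, so it does not use the hypotheses $x/y\in S$ and $yx\in S$ at all, and closedness of that subset plays no role in the ideal property.
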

As $ 0^+(S)$ is a two-sided ideal of $(\beta(0,1)_d, \cdot)$, so $K(\beta(0,1)_d, \cdot)\subseteq 0^+(S)$
and therefore by \cite[Theorem 1.65]{hindalg}, $K(0^+(S), \cdot)=K(\beta(0,1)_d, \cdot)$.  As $0^+(S)$ is a compact Hausdorff right topological semigroup, by
\cite[Theorem 1.3.11]{berg89}, $0^+(S)$ contains a smallest two-sided ideal which is denoted by
$K(0^+(S))$ and it must contain an
idempotent by \cite[Corollary 2.10]{elli}.
It is to be observed that $0^+(S)\cap K(\beta S)=\emptyset$ and hence $0^+(S)$ provides some new
information that is not available from $K(\beta S)$. After
introducing the notion of a semigroup of ultrafilters near
zero or converging to $0$, Hindman and Leader studied some Ramsey Theoretic results near zero in
\cite{hindult}. In \cite{sou20}, Patra and Shaikh gave the name HL semigroups for the semigroups satisfying the hypothesis of the Theorem \ref{2.1}.

\begin{definition}{\normalfont \cite[Definition 5]{sou20} Let $S$ be a dense subsemigroup of $((0, \infty), +)$. Then $S$ is an {\it HL semigroup}
if and only if $S\cap(0,1)$ is a subsemigroup of $((0, 1), \cdot)$ and   for each $y\in S\cap(0,1)$ and
for each $x\in S$, $x/y$ and $yx\in S$.}
 \label{2.2}
 \end{definition}
 In an HL semigroup the requirement $y < 1$ is not essential, but makes the proof of theorems, lemmas,
corollaries related to HL semigroup simpler since under that assumption, if $x < 1/n$, then $yx < 1/n$.

 In \cite{furst}, Furstenberg introduced a number of classes of large sets originating from
topological dynamics. Alternative characterizations of such sets are also available in terms
of the algebraic structure of $\beta\mathbb{N}$.
Now we introduce the IP sets and central sets which have rich combinatorial properties. There is a nice intimate relation between IP sets and idempotents. We need the following definition to move forward.

 \begin{definition} {\normalfont Let $\langle{x_n}\rangle_{n=1}^{\infty}$ be an infinite sequence of
positive real numbers. Then
$FS(\langle{x_n}\rangle_{n=1}^{\infty})=\{\sum_{n\in F}x_n:F\in\mathcal{P}_f(\mathbb N)\}$.}

 \label{2.3}
 \end{definition}

\begin{definition} {\normalfont  A subset  $A$ of $\mathbb N$ is said
to be an $IP${\it-set} if and only if  there exists a sequence $\langle{x_n}\rangle_{n=1}^{\infty}$ in $\mathbb N$
such that  $FS(\langle{x_n}\rangle_{n=1}^{\infty})\subseteq A$.}
\label{2.4}
\end{definition}

\begin{definition} {\normalfont Let $(S,\cdot)$ be a semigroup and let $A\subseteq S$.
$A$ is called {\it Central} in $(S,\cdot)$ if there is some idempotent $p\in K(\beta S)$
such that $A\in p$.}\label{new 2.4}
\end{definition}

The following definition is the near zero version of the IP-set in $\mathbb N$. The IP-set near zero is extremly important object which will be used to prove our desired results in the next section.
\begin{definition} {\normalfont Let $S$ be a dense subsemigroup of $((0, \infty), +)$. A subset  $A$ of $ S$ is said
to be an $IP${\it-set near zero} if and only if  there exists a sequence $\langle{x_n}\rangle_{n=1}^{\infty}$
such that $\sum_{n=1}^{\infty}x_{n}$ converges and $FS(\langle{x_n}\rangle_{n=1}^{\infty})\subseteq A$.}
\label{2.5}
\end{definition}

The following Theorem shows that IP sets near zero are members of idempotents living near zero.

\begin{theorem}{\normalfont \cite[Theorem 3.1]{hindult}} Let $S$ be a dense subsemigroup of $((0, \infty), +)$ and let  $A\subseteq S$. Then $A$
is an $IP$-set near zero  if and only if there is some idempotent $p$ in $0^+(S)$ such that $A\in p$.
\label{2.6}
\end{theorem}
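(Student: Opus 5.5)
We prove the two directions of Theorem \ref{2.6} separately. In both we work in $(\beta S,+)$ and use that $0^+(S)$ is a compact subsemigroup. Nonemptiness comes from density of $S$: the sets $(0,\epsilon)\cap S$ have the finite intersection property. Closedness holds because $0^+(S)=\bigcap_{\epsilon>0}\overline{(0,\epsilon)\cap S}$. For closure under $+$, take $p,q\in 0^+(S)$ and $\epsilon>0$. For every $x\in(0,\epsilon/2)\cap S$ we have $-x+((0,\epsilon)\cap S)\supseteq(0,\epsilon/2)\cap S\in q$. Hence $(0,\epsilon)\cap S\in p+q$.

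\emph{Sufficiency.} Assume $p\in 0^+(S)$ is an idempotent with $A\in p$. We run the standard Galvin--Glazer construction and add a smallness condition at each step. Put $A^\star=\{x\in A: -x+A\in p\}$. Since $p=p+p$, we have $A^\star\in p$, and for every $x\in A^\star$ we have $-x+A^\star\in p$ (\cite[Lemma 4.14]{hindalg}). Choose $x_1\in A^\star\cap(0,1/2)$. Inductively, suppose $x_1,\dots,x_n$ have been chosen with $FS(\langle x_k\rangle_{k=1}^n)\subseteq A^\star$. Then
\[
B_n=A^\star\cap\bigcap\{-s+A^\star: s\in FS(\langle x_k\rangle_{k=1}^n)\}\cap(0,2^{-n-1})\in p ,
\]
because it is a finite intersection of members of $p$ and $p\in 0^+(S)$. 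Pick $x_{n+1}\in B_n$. Then $FS(\langle x_k\rangle_{k=1}^{n+1})\subseteq A^\star\subseteq A$, and $\sum x_n\le\sum 2^{-n}$ converges. So $A$ is an IP-set near zero.

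\emph{Necessity.} Assume $FS(\langle x_n\rangle_{n=1}^\infty)\subseteq A$ with $\sum x_n<\infty$. Let $T_m=FS(\langle x_n\rangle_{n=m}^\infty)$ and $T=\bigcap_{m}\overline{T_m}$. The set $T$ is nonempty and compact. Convergence of the series gives $\sum_{n\ge m}x_n\to 0$, so every element of $T_m$ is smaller than $\sum_{n\ge m}x_n$, and therefore $T\subseteq 0^+(S)$.

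Next we show $T$ is a subsemigroup, following \cite[Lemma 5.11]{hindalg}. Let $p,q\in T$ and fix $m$. For $x=\sum_{n\in F}x_n\in T_m$ with $\max F<k$, we have $T_k\subseteq -x+T_m$. Since $T_k\in q$, this gives $-x+T_m\in q$ for every $x\in T_m$, hence $T_m\in p+q$. By Ellis' theorem (\cite[Corollary 2.10]{elli}) the compact right topological semigroup $T$ contains an idempotent $p$. Then $p\in 0^+(S)$ and $A\supseteq T_1\in p$.

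The main point needing care is that the smallness requirement is compatible with the recursion. It is exactly membership of $p$ in $0^+(S)$ that lets us intersect $B_n$ with $(0,2^{-n-1})$ and stay inside $p$. Conversely, it is the convergence of $\sum x_n$ that forces the tails $T_m$, and hence every ultrafilter in $T$, toward zero.
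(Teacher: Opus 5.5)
Your proof is correct and matches the standard Hindman--Leader argument, which the paper cites rather than reproduces. One direction is the Galvin--Glazer recursion inside $A^\star$ with the extra constraint $x_{n+1}<2^{-n-1}$; the other takes an idempotent in the compact semigroup $T=\bigcap_m\overline{FS(\langle x_n\rangle_{n=m}^\infty)}\subseteq 0^+(S)$, using Hindman--Strauss Lemma 5.11 and Ellis. Your opening check that $0^+(S)$ is closed under $+$ is harmless but not needed, since the idempotent is found inside $T$ and you verify $T\subseteq 0^+(S)$ directly.
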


The following definition is the near zero version of central set.
\begin{definition} {\normalfont Let $S$ be an HL semigroup. A subset  $A$ of $ S$ is said
to be an $Cental${\it-set near zero} if there is some idempotent $p\in K(0^+(S)$
such that $A\in p$.}
\label{2.7}
\end{definition}

In \cite{Bergelson1} Bergelson and Glasscock investigated the interplay between additive and
multiplicative largeness. The following theorem shows combined additive and multiplicative structure can be found in one partition cell for any finite partition of $(S\cap(0,1), \cdot)$.

\begin{theorem}{\normalfont \cite[Theorem 5.6]{hindult}} Let $S$ be an HL semigroup. Let $r\in \mathbb{N}$ and let $S\cap(0,1) = \bigcup_{i=1}^{r}B_{i}$.
Then there is some
$i\in \{1,2,\ldots,r\}$ such that $B_{i}$ is central near zero and $B_{i}$ is central in $(S\cap(0,1),\cdot)$.
\label{2.8}
\end{theorem}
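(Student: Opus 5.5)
The plan is to find a single ultrafilter $p$ on $S\cap(0,1)$ that is a minimal idempotent of $(\beta(S\cap(0,1)),\cdot)$ and also lies in the closure of the set of additive minimal idempotents near zero. Once we have such a $p$, we choose $i$ with $B_i\in p$. This $B_i$ is central in $(S\cap(0,1),\cdot)$ because $p\in K(\beta(S\cap(0,1)),\cdot)$ is idempotent. It is also central near zero, because $\overline{B_i}$ is a neighbourhood of $p$ and therefore contains some idempotent $q\in K(0^+(S),+)$. Note that every member of $0^+(S)$ contains $S\cap(0,1)$, so we may regard $0^+(S)\subseteq\beta(S\cap(0,1))$ and work inside one compact space. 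Accordingly, set $E=E(K(0^+(S),+))$ and $M=\mathrm{cl}\,E$. The goal is to show that $M$ is a left ideal of $(\beta(S\cap(0,1)),\cdot)$.

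First, fix $y\in S\cap(0,1)$. The map $x\mapsto yx$ is a homomorphism of $(S,+)$ into itself. By the HL hypothesis it is a bijection of $S$, with inverse $x\mapsto x/y$, and both directions preserve being in $(0,\epsilon)$ up to rescaling $\epsilon$. Its continuous extension $\lambda_y$ to $\beta S$ is therefore a homeomorphism. It restricts to a map of $0^+(S)$ onto $0^+(S)$ and is an additive homomorphism there; the homomorphism property follows from \cite[Corollary 4.22]{hindalg}, since the map is a homomorphism on the discrete semigroup with image in the center. Hence $\lambda_y$ is a topological automorphism of $(0^+(S),+)$. An automorphism maps the smallest ideal onto itself and idempotents to idempotents, so $yE=E$. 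The multiplication here is the product in $(\beta(S\cap(0,1)),\cdot)$ restricted to the principal ultrafilter $y$, which agrees with $\lambda_y$ because $y$ lies in the topological center.

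Next we pass to arbitrary $r\in\beta(S\cap(0,1))$ and $q\in M$. Write $r\cdot q=\lim_{y\to r} y\cdot q$, using continuity of $\rho_q$. For $q\in M$, continuity of $\lambda_y$ gives $y\cdot q\in \mathrm{cl}(yE)=M$, and since $M$ is closed, $r\cdot q\in M$. Thus $M$ is a closed left ideal of the compact right topological semigroup $(\beta(S\cap(0,1)),\cdot)$. It therefore contains a minimal left ideal $L$, and $L$ contains an idempotent $p\in K(\beta(S\cap(0,1)),\cdot)$. This $p$ has the two properties required in the first paragraph.

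I expect the main obstacle to be the first step. We must check carefully that $\lambda_y$, which is defined through the multiplicative structure of $\beta(S\cap(0,1))$, coincides with the extension of the additive homomorphism $x\mapsto yx$. We must also check that it is genuinely an automorphism of $(0^+(S),+)$, so that it preserves $K$ and $E$ and not merely maps them into $0^+(S)$. This is exactly where the HL condition that $x/y\in S$ is used.
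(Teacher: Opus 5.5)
Your proposal is correct. It follows the standard argument behind the cited result; the paper gives no proof of its own and only refers to Hindman--Leader. The HL hypothesis makes $x\mapsto yx$ an additive automorphism of $0^+(S)$ for each $y\in S\cap(0,1)$, so $\operatorname{cl}E(K(0^+(S),+))$ is a nonempty closed left ideal of $(\beta(S\cap(0,1)),\cdot)$. It therefore contains a minimal multiplicative idempotent $p$, and any cell $B_i\in p$ is both central near zero and central in $(S\cap(0,1),\cdot)$.
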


As a consequence of the above theorem we have the following corollary.

\begin{corollary}{\normalfont \cite[Corollary 1]{sou20}} Let $S$ be an HL semigroup. Let $r\in \mathbb{N}$ and let
$S\cap(0,1) = \bigcup_{i=1}^{r}B_{i}$.
Then there is some
$i\in \{1,2,\ldots,r\}$ such that $B_{i}$ is an $IP$-set near zero and $B_{i}$ is central in
$(S\cap(0,1),\cdot)$.
\label{2.9}
\end{corollary}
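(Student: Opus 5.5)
The plan is to deduce the statement directly from Theorem \ref{2.8} together with the characterization of $IP$-sets near zero in Theorem \ref{2.6}; no new combinatorics should be needed. Given the partition $S\cap(0,1)=\bigcup_{i=1}^{r}B_{i}$, I will first apply Theorem \ref{2.8} to obtain an index $i\in\{1,2,\ldots,r\}$ such that $B_i$ is central near zero and, simultaneously, $B_i$ is central in $(S\cap(0,1),\cdot)$. The multiplicative half of the conclusion then holds already, so it remains to show that this same $B_i$ is an $IP$-set near zero.

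For that, I will unpack Definition \ref{2.7}. Since $B_i$ is central near zero, there is an idempotent $p\in K(0^+(S))$ with $B_i\in p$. In particular $p$ is an idempotent of $(0^+(S),+)$, because $K(0^+(S))\subseteq 0^+(S)$. Here one should note that $B_i\subseteq S\cap(0,1)\subseteq S$, so $B_i$ is a legitimate subset of $S$ to which Theorem \ref{2.6} applies, and $p$ is an ultrafilter on $S$. By the ``if'' direction of Theorem \ref{2.6}, the existence of an idempotent $p\in 0^+(S)$ with $B_i\in p$ shows that $B_i$ is an $IP$-set near zero. That is, there is a sequence $\langle x_n\rangle_{n=1}^{\infty}$ with $\sum_{n=1}^\infty x_n$ convergent and $FS(\langle x_n\rangle_{n=1}^{\infty})\subseteq B_i$.

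Combining the two observations gives the corollary for the same index $i$. The only point requiring a little care, and the nearest thing to an obstacle, is bookkeeping about ambient semigroups. Centrality near zero refers to the additive semigroup $0^+(S)\subseteq\beta(S,+)$, whereas multiplicative centrality refers to $\beta(S\cap(0,1),\cdot)$. We must make sure that we invoke Theorem \ref{2.6} with the additive idempotent and not the multiplicative one. Once that distinction is kept straight, the argument is a two-line deduction.
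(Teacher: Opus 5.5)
Your proposal is correct and matches the paper's intended argument. The paper gives no written proof and simply calls the corollary a consequence of Theorem \ref{2.8}. Your two steps fill that in: the additive minimal idempotent $p\in K(0^+(S))$ witnessing centrality near zero gives, via Theorem \ref{2.6}, that $B_i$ is an $IP$-set near zero, and multiplicative centrality is carried over directly.
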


\section{Product and Translated Product Near zero}

In \cite{sayan25}, Goswami proved that $\{a, b, ab, b(a+1)\}$ is monochromatic for every finite coloring of $\mathbb N$. He used the technique of the
proof similar to the argument of Moreira \cite{mor2017}, who employed van der Waerden’s theorem \cite{waerden}, whereas
 Goswami made iterative use of Hindman’s theorem. In this section, our main target is to show that $\{a, b, ab, b(a+1)\}$
is monochromatic near zero for dense subsemigroups of $((0, \infty), +)$ which means  we have to show that for any finite partition of $S\cap(0,1)$, at least one partition cell contains $a, b, ab, b(a+1)$. We shall follow the similar arguments made by Goswami in \cite{sayan25}. Before going to our main theorem, we need the following definition.

\begin{definition} {\normalfont Let $S$ be an HL semigroup and let  $A\subseteq S$. Then we denote
\begin{enumerate}
 \item $-y+A=\{x\in S: x+y\in A\}$,
 \item $y^{-1}A=\{x\in S: xy\in A\}$.
 \end{enumerate}}
\label{3.1}
\end{definition}
 We need three following lemmas to prove our main theorem. The following lemma is a near zero version of reformulated
Hindman's theorem \cite{hind74}.
\begin{lemma}
 Let $S$ be an HL semigroup. Let  $A\subseteq S$ be an IP-set near zero, then there exists $y\in A$ such that $(-y+A)\cap A$ is also an IP-set near zero.
\label{3.2}\end{lemma}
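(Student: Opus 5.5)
We will use Theorem \ref{2.6}, which identifies IP-sets near zero with members of idempotents in $0^+(S)$. Since $A$ is an IP-set near zero, Theorem \ref{2.6} gives an idempotent $p=p+p\in 0^+(S)$ with $A\in p$.

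The key step is the standard characterization of membership in a sum of ultrafilters: $A\in p+p$ holds if and only if $\{y\in S: -y+A\in p\}\in p$. Put $A^\star=\{y\in A: -y+A\in p\}$. Since $p+p=p$ and $A\in p$, the set $\{y: -y+A\in p\}$ belongs to $p$. Intersecting with $A$ shows $A^\star\in p$, and in particular $A^\star\neq\emptyset$. Now pick any $y\in A^\star$. Then $y\in A$ and $-y+A\in p$. Because $p$ is a filter, $(-y+A)\cap A\in p$.

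Since $p$ is an idempotent of $0^+(S)$ containing $(-y+A)\cap A$, the converse direction of Theorem \ref{2.6} shows that $(-y+A)\cap A$ is an IP-set near zero, which is the claim.

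The only point needing care is the interpretation of $-y+A$. Definition \ref{3.1} sets $-y+A=\{x\in S: x+y\in A\}$, and in the ultrafilter description of $p+q$ the relevant set for the outer variable $x$ is $\{y: x+y\in A\}$. Addition in $S$ is commutative, so these agree and the computation goes through. The HL hypothesis is not used beyond guaranteeing that $0^+(S)$ is the appropriate semigroup; denseness alone suffices.

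If one prefers to avoid ultrafilters, there is a direct combinatorial route. Given $FS(\langle x_n\rangle)\subseteq A$ with $\sum x_n<\infty$, take $y=x_1$. Then $FS(\langle x_n\rangle_{n\ge2})\subseteq (-x_1+A)\cap A$, and the tail series $\sum_{n\ge 2}x_n$ still converges, so this intersection is an IP-set near zero.
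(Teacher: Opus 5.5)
Your argument is correct and is essentially the paper's proof: take an idempotent $p\in 0^+(S)$ with $A\in p$, observe that $A^\star=\{y\in A: -y+A\in p\}\in p$, pick $y\in A^\star$ so that $(-y+A)\cap A\in p$, and conclude by Theorem~\ref{2.6}; you even skip the paper's unnecessary intermediate step $-y+A^\star\in p$. Your closing remark is also valid: taking $y=x_1$ and the tail sequence $\langle x_n\rangle_{n\ge 2}$ gives a more elementary proof that avoids ultrafilters altogether.
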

\begin{proof}
 Since $A\subseteq S$ is an IP-set near zero, there exists $p\in E(0^+(S))$ such that $A\in p$. Let $A^*=\{x\in A: -x+A\in p\}$, then $A^*\in p$. Take $y\in A^*$, then $-y+A^*\in p$. So, $(-y+A^*)\cap A\in p$ and thus $(-y+A)\cap A\in p$. Therefore, $(-y+A)\cap A$ is  an IP-set near zero.
\end{proof}

\begin{lemma}
 Let $S$ be an HL semigroup. Let  $A\subseteq S$ be an IP-set near zero, then for any $y\in S\cap (0, 1)$,  $y^{-1}A$ is also an IP-set near zero.
\label{3.3}\end{lemma}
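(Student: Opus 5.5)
The plan is to work directly with the sequence characterization of IP-sets near zero in Definition \ref{2.5}, rather than with ultrafilters. The key observation is that multiplication by a fixed positive real is additive, so it sends finite sums to finite sums. Since $y\in(0,1)$, multiplying by $1/y$ stays inside $S$ by the HL hypothesis. So the idea is to rescale a witnessing sequence for $A$ by $1/y$ and check that the rescaled sums land in $y^{-1}A$.

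\emph{Step 1.} Since $A$ is an IP-set near zero, fix a sequence $\langle x_n\rangle_{n=1}^\infty$ in $S$ with $\sum_{n=1}^\infty x_n$ convergent and $FS(\langle x_n\rangle_{n=1}^\infty)\subseteq A$.

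\emph{Step 2.} Put $z_n=x_n/y$. By Definition \ref{2.2}, $y\in S\cap(0,1)$ and $x_n\in S$ give $x_n/y\in S$, so each $z_n\in S$. Moreover,
\[
\sum_{n=1}^\infty z_n=\frac{1}{y}\sum_{n=1}^\infty x_n
\]
converges.

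\emph{Step 3.} Take any $F\in\mathcal P_f(\mathbb N)$. Then $\sum_{n\in F}z_n\in S$, because $S$ is additively closed, and
\[
\Bigl(\sum_{n\in F}z_n\Bigr)y=\sum_{n\in F}x_n\in A.
\]
Hence $\sum_{n\in F}z_n\in y^{-1}A$ by Definition \ref{3.1}, so $FS(\langle z_n\rangle_{n=1}^\infty)\subseteq y^{-1}A$. Therefore $y^{-1}A$ is an IP-set near zero.

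\emph{Main obstacle.} The only delicate point is membership in $S$. The HL condition supplies $x/y\in S$ for $x\in S$ and $y\in S\cap(0,1)$, and this is exactly what Step 2 uses, while additive closure of $S$ handles Step 3. The fact that $z_n$ may be larger than $x_n$ is harmless, since convergence of the series is all Definition \ref{2.5} requires.

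An alternative route goes through Theorem \ref{2.6}. Take $p\in E(0^+(S))$ with $A\in p$, and consider the image of $p$ under $x\mapsto x/y$. This image is an idempotent in $0^+(S)$, because the map is an additive homomorphism that preserves convergence to $0$, and it contains $y^{-1}A$. The sequence argument above is more elementary, so I would present that one.
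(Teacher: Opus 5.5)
Your proof is correct, but your main argument is not the paper's; the paper's proof is essentially the ``alternative route'' you sketch at the end. The paper takes an idempotent $p\in 0^+(S)$ with $A\in p$ (via Theorem~\ref{2.6}) and pushes it forward along $f(x)=x/y$. Since the continuous extension $\tilde f$ of an additive homomorphism is again a homomorphism, $\tilde f(p)$ is an idempotent. The paper then notes that $y^{-1}A=f[A]\in\tilde f(p)$; the identity $y^{-1}A=\{a/y: a\in A\}$ itself uses the HL condition. Your rescaling of a witnessing sequence needs only Definition~\ref{2.5}, the HL condition $x/y\in S$, and additive closure of $S$. It bypasses the Hindman--Leader characterization entirely. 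It also avoids a point the paper leaves implicit: that $\tilde f(p)$ really lies in $0^+(S)$, which holds because $f$ maps $S\cap(0,\epsilon y)\in p$ into $(0,\epsilon)$. What the ultrafilter route buys is a specific idempotent $\tilde f(p)$ tied to $p$, in the same style as Lemmas~\ref{3.2} and~\ref{3.4}. Moreover, $x\mapsto x/y$ is a bijection of $S$ with inverse $x\mapsto xy$, so $\tilde f$ is an automorphism of $0^+(S)$. Hence the same argument would also transfer properties defined only through ultrafilters, such as being central near zero. For the way the lemma is used in Theorem~\ref{3.6}, where only the IP-near-zero property matters, your elementary argument is fully sufficient.
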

\begin{proof}
 Let $p$ be an idempotent of $0^+(S)$ such that $A\in p$. The function $f: S\longrightarrow S$ given by $f(x)=y^{-1}x$ is an injective homomorphism and so is its continuous extension $\tilde{f}: \beta S\longrightarrow \beta S$ by \cite[Corollary 4.22]{hindalg} and \cite[Exercise 3.4.1]{hindalg}. So, $y^{-1}p$ is an idempotent of $0^+(S)$ and $y^{-1}A\in y^{-1}p$. Thus $y^{-1}A$ is  an IP-set near zero.
\end{proof}

\begin{lemma}
 Let $S$ be an HL semigroup. Let $r\in \mathbb N$ and $S=\bigcup_{i=1}^rA_i$ be a finite r-coloring of $S$. Let  $A\subseteq S$ be an IP-set near zero, then there exists $i\in \{1, 2, 3, \ldots, r\}$ such that $A\cap A_i$ is also an IP-set near zero.
\label{3.4}\end{lemma}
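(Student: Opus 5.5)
We argue through ultrafilters, as in the proofs of Lemma \ref{3.2} and Lemma \ref{3.3}. Since $A$ is an IP-set near zero, Theorem \ref{2.6} gives an idempotent $p\in E(0^+(S))$ with $A\in p$. The whole argument then rests on the fact that $p$ is an ultrafilter on $S$, so it is prime: whenever a finite union of sets belongs to $p$, at least one of those sets belongs to $p$.

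The key steps, in order, are as follows. Since $S=\bigcup_{i=1}^rA_i\in p$, primeness of $p$ gives an $i\in\{1,2,\ldots,r\}$ with $A_i\in p$. Because $p$ is a filter and both $A$ and $A_i$ lie in $p$, we also have $A\cap A_i\in p$. Applying Theorem \ref{2.6} in the reverse direction to the idempotent $p\in 0^+(S)$, we conclude that $A\cap A_i$ is an IP-set near zero.

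I do not expect a genuine obstacle here. The only point needing care is that Theorem \ref{2.6} is used in both directions for subsets of $S$, and $A\cap A_i\subseteq S$, so this is legitimate.

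As a purely combinatorial alternative, one could color $\FS(\langle x_n\rangle_{n=1}^\infty)$ and apply a near-zero version of Hindman's theorem to a convergent sum sequence. That route, however, essentially reproduces the proof of Theorem \ref{2.6}, so the ultrafilter argument above is the one I would use.
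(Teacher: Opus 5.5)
Your proposal is correct and follows the paper's proof essentially verbatim. You take an idempotent $p\in 0^+(S)$ with $A\in p$, use primeness of $p$ to get some $A_i\in p$, conclude $A\cap A_i\in p$, and apply Theorem \ref{2.6} in the reverse direction.
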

\begin{proof}
 Choose $p\in E(0^+(S))$ such that $A\in p$. Now $\bigcup_{i=1}^rA_i=S\in p$ and so $A_i\in p$ for some  $i\in \{1, 2, 3, \ldots, r\}$. Thus $A\cap A_i\in p$ for some  $i\in \{1, 2, 3, \ldots, r\}$. Therefore, $A\cap A_i$ is  an IP-set near zero.
\end{proof}
Now we state Goswami's theorem and next we give its near zero version.

\begin{theorem}{\normalfont \cite[Theorem 1.1]{sayan25}} For any finite coloring of $\mathbb N$, there exist $a, b\in \mathbb N$ such that the set $\{a, b, ab, b(a+1)\}$ is monochromatic.

\label{3.5}\end{theorem}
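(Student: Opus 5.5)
The plan is to follow the iterative Hindman scheme of Goswami. The identity $b(a+1)=ab+b$ suggests the substitution $x=ab$, under which $\{a,b,ab,b(a+1)\}$ becomes $\{x/b,\;b,\;x,\;x+b\}$. So it suffices to find $b$ and a multiple $x$ of $b$ with $x/b$, $b$, $x$, $x+b$ all in one colour class. Fix a colouring $\mathbb N=\bigcup_{i=1}^r C_i$ and write $c(n)$ for the colour of $n$. I will work with IP sets in $(\mathbb N,+)$, using the $\mathbb N$-analogues of Lemma \ref{3.2}, Lemma \ref{3.3} and Lemma \ref{3.4}:
\begin{enumerate}
\item every IP set $A$ has some $y\in A$ with $(-y+A)\cap A$ an IP set;
\item for every $m\in\mathbb N$, the set $m^{-1}A$ is again an IP set;
\item restricting an IP set to one colour class leaves an IP set.
\end{enumerate}
The analogue of (2) in $\mathbb N$ needs care, since $m^{-1}A$ must be nonempty. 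I will ensure this by intersecting with $m\mathbb N$, which is a member of every additive idempotent of $\beta\mathbb N$.

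The construction runs for $r+1$ stages and produces elements $y_1,\dots,y_{r+1}$ and IP sets $B_0\supseteq B_1\supseteq\cdots\supseteq B_{r+1}$. Let $P_{j,k}=y_{j+1}y_{j+2}\cdots y_k$, with $P_{j,j}=1$. At stage $k$ the aim is to preserve the invariant
\[
c\bigl(P_{j,k}\,z\bigr) \text{ depends only on } j \text{ for all } z\in B_k \text{ and } j<k,
\]
together with the requirement that $P_{j,k}\in B_j$ for each $j<k$. To pass from stage $k$ to stage $k+1$ I do three things.
\begin{enumerate}
\item Apply (1) to $B_k$ to choose $y_{k+1}\in B_k$ for which $(-y_{k+1}+B_k)\cap B_k$ is an IP set; this is what will supply the additive pattern $x+b$.
\item Intersect this set with $y_{k+1}\mathbb N$ and divide by $y_{k+1}$ using (2); this keeps the multiplicative pattern $ab$ available.
\item Apply (3) a bounded number of times, once for each $j\le k+1$, so that the colours $c(P_{j,k+1}z)$ are constant on the resulting IP set $B_{k+1}$.
\end{enumerate}
Since there are only $r$ colours and $r+1$ stages, pigeonhole then yields indices $j<k$ at which the colour recorded for $b=P_{j,k}$ coincides with the colour of the relevant multiples.

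The remaining step is to read off $a,b$. Take $b=P_{j,k}$ and $x=b\,a$ with $a$ chosen in $B_k$. The invariant gives $c(a)=c(ab)$. The choice of $y_{k+1}$ inside $(-y+B)\cap B$ gives that both $a$ and $a+1$ (or more generally $a$ and $a+\text{unit}$) lie in the set on which $c(b\,\cdot)$ is constant, so $c(ab)=c(b(a+1))$. The pigeonhole coincidence gives $c(b)=c(a)$.

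I expect the main obstacle to be this last compatibility: arranging the \emph{additive} shift by exactly $1$ in $a\mapsto a+1$ inside an IP structure. Plain IP sets need not contain $1$ or be closed under $+1$. My first attempt will be to normalise by dividing by $y_{k+1}$ at stage (2), so that the shift by $y_{k+1}$ coming from (1) becomes a shift by $1$ in the rescaled set. This is exactly why (1) and (2) must be applied in that order with the same element $y_{k+1}$. If this fails, the fallback is to replace $a+1$ by $a+y$ throughout and recover the original pattern by rescaling $b$.
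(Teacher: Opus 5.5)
The paper does not prove this statement itself; it quotes it from Goswami. Its proof of Theorem~\ref{3.6}, however, transcribes Goswami's argument, and your plan is essentially that argument: build $B_k=y_k^{-1}\bigl((-y_k+B_{k-1})\cap B_{k-1}\bigr)\cap C_{i_k}$, pigeonhole on the colours $i_k$, and take $a\in B_k$, $b=y_{j+1}\cdots y_k$. The rescaling you flag as the main obstacle is exactly the mechanism that works, since $a\in y^{-1}\bigl((-y+B)\cap B\bigr)$ means both $ay\in B$ and $(a+1)y\in B$. So no fallback is needed. Your handling of $m^{-1}A$ in $\mathbb N$, via $m\mathbb N\in p$ for every additive idempotent $p$, is correct.

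The bookkeeping needs repair before your last paragraph is actually justified.

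\begin{enumerate}
\item The sets are not nested. Step (2) gives $B_k\subseteq y_k^{-1}B_{k-1}$, not $B_k\subseteq B_{k-1}$. This chain does the real work, because it yields $w\,y_{k-1}\cdots y_{j+1}\in B_j$ for every $w\in B_{k-1}$.
\item Consequently $P_{j,k}\in B_j$ (as $y_k\in B_{k-1}$), and your colour invariant also comes for free. The only refinement needed at stage $k$ is to put $B_k$ into a single colour class, and the pigeonhole should be applied to those colours.
\item In the final step the relevant element is $y_k$, the last factor of $b=P_{j,k}$. It is not $y_{k+1}$, which does not occur in $b$ at all.
\item $a+1$ need not lie in $B_k$, so an invariant stated only for $z\in B_k$ says nothing about $c\bigl(b(a+1)\bigr)$. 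What you actually use is $ay_k,(a+1)y_k\in B_{k-1}$; by the chain, $ab$ and $b(a+1)$ then lie in $B_j$.
\item $c(a)=c(ab)$ does not come from the invariant. It comes from the pigeonhole coincidence of the colours of $B_k$ and $B_j$.
\end{enumerate}

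With these corrections, $a$, $b$, $ab$ and $b(a+1)$ all lie in the common colour class of $B_j$ and $B_k$, exactly as in the paper's argument.
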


\begin{theorem} Let $S$ be an HL semigroup. For any finite coloring of $S$, there exist $a, b\in S$ such that the set $\{a, b, ab, b(a+1)\}$ is monochromatic near zero.

\label{3.6}\end{theorem}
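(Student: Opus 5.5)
The plan is to recast the statement as a configuration $x, y, x+y, y/x$. Since $b(a+1)=ab+b$, putting $x=b$ and $y=ab$ turns the required set $\{a,b,ab,b(a+1)\}$ into $\{y/x,\,x,\,y,\,x+y\}$. So for a given $\epsilon>0$ it suffices to find one colour class $C$ and $x,y\in C\cap(0,\epsilon)$ with $x+y\in C$ and $y/x\in C$. Taking $y<x$ (equivalently $a<1$) keeps everything inside $S\cap(0,1)$, and the HL property gives $y/x\in S$. We then set $a=y/x$ and $b=x$; these are as small as we like, because $x<\epsilon$ and $a$ can also be forced below $\epsilon$. The argument will imitate Goswami's iteration of Hindman's theorem, with Lemmas \ref{3.2}, \ref{3.3} and \ref{3.4} as the near-zero replacements for the three facts he uses: IP sets contain a point whose shifted intersection is again IP, dilates of IP sets are IP, and IP sets are partition regular.

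We build, by recursion on $n\le r+1$, elements $a_1,\dots,a_n\in S\cap(0,\epsilon)$, colours $c_1,\dots,c_n$, and a decreasing chain of IP-sets near zero $D_0\supseteq D_1\supseteq\cdots$, starting from $D_0=S\cap(0,\epsilon)$. At step $n$ we first apply Lemma \ref{3.4} to $D_{n-1}$ to obtain a colour $c_n$ for which $D_{n-1}\cap A_{c_n}$ is an IP-set near zero. Lemma \ref{3.2} then gives $a_n$ in this set such that $(-a_n+D_{n-1}\cap A_{c_n})\cap D_{n-1}\cap A_{c_n}$ is IP near zero. Finally we pass to the dilate by $a_n$ using Lemma \ref{3.3}, so that $D_n\subseteq a_n^{-1}(\cdots)$. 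The aim is an invariant of the following form: for every $j<n$ and every $b\in D_n$, the elements $b$, $b\,\alpha_{j,n}$ and $b\,\alpha_{j,n}+b$ all lie in $A_{c_j}$, where $\alpha_{j,n}$ is a ratio or product of the chosen $a$'s that itself lies in $A_{c_j}$. After $r+1$ steps the pigeonhole principle gives $j<k$ with $c_j=c_k=:c$. We then take $a=\alpha_{j,k}$ and $b\in D_k\cap A_c$, the latter being available by one more application of Lemma \ref{3.4}. This yields $a,b,ab,ab+b\in A_c$, all in $(0,\epsilon)$.

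The main obstacle is arranging for the multiplier $a$ to carry the right colour. Lemma \ref{3.3} controls the colour of $b$ and of $ab$, but the colour of the ratio or product $\alpha_{j,k}$ is not automatically $c$. In Goswami's proof this is handled by taking $a$ to be a single chosen element whose colour was fixed when it was picked, with the dilations nested so that $b\,a$ and $b\,a+b$ inherit the colour $c_j$. My first attempt is therefore to choose $a=a_j$ itself and to put the dilation by $a_j$, together with the shift by $a_j$-multiples, into all later sets $D_n$, $n>j$. Concretely, I would demand $D_n\subseteq\bigcap_{j<n}\bigl(a_j^{-1}A_{c_j}\cap a_j^{-1}(-D_j)\bigr)$ in the appropriate sense, and at each stage check that the intersection is still IP near zero, using only that a finite intersection of members of a single idempotent $p\in 0^+(S)$ is again in $p$. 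If this bookkeeping does not close up, the fallback is the algebraic route: use Theorem \ref{2.8} to pick a colour $C$ belonging to a minimal idempotent of $0^+(S)$ which is also multiplicatively central, and exploit that $0^+(S)$ is a multiplicative ideal (Theorem \ref{2.1}). Then $\{x: x^{-1}C\in p\}$ is a large set for a suitable $p$, and Lemma \ref{3.2} applied inside it produces $x,y,x+y\in C$ with $y/x\in C$.
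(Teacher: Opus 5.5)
\textbf{There is a genuine gap: you read $a$ and $b$ off the recursion the wrong way round, and the repair you propose does not work.}

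Your recursion itself is the paper's. Write $E_n=D_{n-1}\cap A_{c_n}$. Then $a_n\in E_n$ and
$E_{n+1}=a_n^{-1}\bigl((-a_n+E_n)\cap E_n\bigr)\cap A_{c_{n+1}}$, which is exactly the paper's $D_n=y_n^{-1}((-y_n+D_{n-1})\cap D_{n-1})\cap A_{i_n}$.

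Now see what it actually yields. Take $j<k$ and $z\in E_k\subseteq D_{k-1}$, and put $P=a_{k-1}a_{k-2}\cdots a_j$.
\begin{itemize}
\item The nested dilations give $P\in E_j$ (start from $a_{k-1}\in E_{k-1}$) and $zP\in E_j$.
\item Because the shift at each stage is by the chosen element $a_n$, also $zP+P=(z+1)P\in E_j$.
\end{itemize}
So the recursion translates by the \emph{product}. The right assignment is $a=z$, $b=P$. When $c_j=c_k$ this gives $a,b,ab,b(a+1)\in A_{c_k}$. That is precisely the paper's choice $a=x\in D_n\subseteq A_{i_n}$, $b=y_ny_{n-1}\cdots y_{j+1}$. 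In particular, the ``main obstacle'' you name is not there: the product automatically has colour $c_j$.

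With your assignment ($a=\alpha_{j,k}$ or $a=a_j$, and $b$ the element chosen last), you need $b(a+1)=ab+b\in A_c$. That is a shift by the last-chosen element, which no earlier set in the recursion can encode. The invariant you state is not what the construction gives: it has $b\alpha_{j,n}+b$ rather than $b\alpha_{j,n}+\alpha_{j,n}$. It even asks for $b\in A_{c_j}$ for all $j<n$, which is impossible when the $c_j$ differ.

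Your repair does not close either.
\begin{itemize}
\item Requiring $D_n\subseteq a_j^{-1}A_{c_j}$, and for $ba_j+b$ really $D_n\subseteq (1+a_j)^{-1}A_{c_j}$, means intersecting $D_{n-1}$ with dilates of colour classes.
\item As the proof of Lemma \ref{3.3} shows, a dilate $t^{-1}A$ of a member $A$ of an idempotent $p$ is a member of the idempotent $t^{-1}p$, not of $p$ in general.
\item So ``finite intersections of members of a single idempotent'' does not apply. Nothing guarantees that these intersections are IP-sets near zero, or even nonempty.
\end{itemize}
Likewise, ``one more application of Lemma \ref{3.4}'' cannot produce $b\in D_k\cap A_c$ for a \emph{prescribed} colour $c$; it only yields some colour. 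The paper avoids this by taking the last element from a set already contained in $A_{i_n}$ and pigeonholing on $i_j=i_n$.

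Finally, the fallback via Theorem \ref{2.8} is not an argument. Nothing in it explains why $y/x$ should land in $C$, and that is the whole difficulty. The paper's alternative in Section 4 gets the ratio from the ultrafilter $D(u\otimes u)$ of Lemma \ref{4.2} and Theorem \ref{4.3}, a tool your sketch does not use.
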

\begin{proof}
 Let $\epsilon>0$ be given. Without loss of generality, assume that $0<\epsilon<1$. Suppose that $S$ is finitely colored. $S=\bigcup_{i=1}^rA_i$. Without loss of generality, assume that $A_1$ s  an IP-set near zero, and set $D_0=A_1$. By Lemma \ref{3.2}, we can choose an element $y_1\in A_1\cap (0, \epsilon/2)$ such that $(-y_1+A_1)\cap A_1$ is  an IP-set near zero. Then by Lemma \ref{3.3}, $C_1=y_1^{-1}((-y_1+A_1)\cap A_1)$ is  an IP-set near zero. Finally by Lemma \ref{3.4},  there exists $i\in \{1, 2, 3, \ldots, r\}$ such that $D_1=C_1\cap A_{i_1}=y_1^{-1}((-y_1+A_1)\cap A_1)\cap  A_{i_1}$ is  an IP-set near zero.

 Using Lemma \ref{3.2} and Lemma \ref{3.3}, choose $y_2\in D_1\cap (0, \epsilon/2)$ such that $C_2=y_2^{-1}((-y_2+D_1)\cap D_1)$ is  an IP-set near zero. Then by Lemma \ref{3.4}, there exists $i\in \{1, 2, 3, \ldots, r\}$ such that $D_2=C_2\cap A_{i_2}$ is  an IP-set near zero. Proceeding inductively, we define for each $n\in \mathbb N$:
 \begin{enumerate}
  \item Choose $y_n\in D_{n-1}\cap (0, \epsilon/2)$ and $i_n\in \{1, 2, 3, \ldots, r\}$ such that
  \item the set $D_n=y_n^{-1}((-y_n+D_{n-1})\cap D_{n-1})\cap  A_{i_n}$ is  an IP-set near zero.
  \end{enumerate}
  So, there exists $k\in \{1, 2, 3, \ldots, r\}$ such that $A_{i_j}=A_{i_n}=A_k$ for some indices $j<n$. Now choose $x\in D_n\cap (0, \epsilon/2)\subseteq A_{i_n}=A_k$.

  From the construction in (2), we have $xy_n\in D_{n-1}\subseteq y_{n-1}^{-1}D_{n-2}\subseteq\cdots \subseteq y_{n-1}^{-1}\cdots y_{j+1}^{-1}D_j$. Thus $xy_ny_{n-1}\cdots y_{j+1}\in D_j\subseteq A_{i_j}=A_k$.

  Moreover, since $y_n\in D_{n-1}$, applying the same argument, we have $y_ny_{n-1}\cdots y_{j+1}\in D_j\subseteq A_{i_j}=A_k$.

  Thus, both $y_ny_{n-1}\cdots y_{j+1}$ and $xy_ny_{n-1}\cdots y_{j+1}$ lie in the same color class $A_k$.

  Again, observe that $xy_n+y_n\in D_{n-1}$, so by the same argument as above $(xy_n+y_n)y_{n-1}\cdots y_{j+1}\in D_j\subseteq A_{i_j}$.

  Now define

  (1) $a=x$

  (2) $b=y_ny_{n-1}\cdots y_{j+1}$

  Then $a, b\in A_k$, and $xy_ny_{n-1}\cdots y_{j+1}=ab\in A_k$ and $(xy_n+y_n)y_{n-1}\cdots y_{j+1}=(a+1)b\in A_k$.

  Also $0<x<\epsilon/2$ and $0<y_t<\epsilon/2$, for $j+1\leq t\leq n$ imply $0<a<\epsilon/2<\epsilon$,
$0<b<\epsilon/2<\epsilon$ and hence $0<ab<\epsilon/2<\epsilon$ and $0<(a+1)b<\epsilon$.

Therefore, the set $\{a, b, ab, b(a+1)\}$ is monochromatic near zero.

This completes the proof.
\end{proof}

\section{Sums and Quotients Near zero}

In the last section, we have proved our desired results using the combinatorial arguments. Here in this present section, we
give more strengthened results of the results discussed in the last section using  tensor product of ultrafilters, i.e., our results of this present section  will be based on algebraic approach. Let us begin with by recalling the definition of tensor product.
\begin{definition} {\normalfont \cite[Definition 1.1]{hindtensor}}
 Let $X$ and  $Y$ be discrete spaces, let $p\in \beta X$, and let $q\in \beta Y$. Then the tensor product of $p$ and $q$ is defined by $p\otimes q=\{A\subseteq X\times Y: \{x\in X: \{y\in Y: (x, y)\in A\}\in q\}\in p\}$.
\end{definition}
It is easy to see that $ A\times B\in p\otimes q$ if and only if $A\in p$ and $B\in q$. Now we define a mapaping which will connect the tensor product with $0^+(S)$ for an HL semigroup.
\begin{definition}{\normalfont Let $S$ be an HL semigroup and $s_0\in S$ be a fixed element of $S$. Define a map $D: S\times S\longrightarrow S$ by $D(x, y)=\begin{cases}  \frac{y}{x}, \text{ if } x\in S\cap (0, 1), \\ s_0, \text{ otherwise}. \end{cases}$}
\end{definition}
The following lemma shows that the image of a tensor product of $u\in 0^+(S)$ with itself under $D$ is in $0^+(S)$.
\begin{lemma}\label{4.2}
 Let $S$ be an HL semigroup. If  $u\in 0^+(S)$, then $D(u\otimes u)\in 0^+(S)$.
\end{lemma}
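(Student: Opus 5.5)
The plan is to verify the defining condition of $0^+(S)$ directly for the image ultrafilter. Concretely, I will show that $(0,\epsilon)\cap S\in D(u\otimes u)$ for every $\epsilon>0$, by computing its preimage under $D$ and checking that the preimage lies in $u\otimes u$.

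First I would settle that $D(u\otimes u)$ is a genuine element of $\beta S$. Since $u\in\beta S$, $u\otimes u$ is an ultrafilter on the discrete set $S\times S$; this is the standard fact for tensor products. By Definition 2.2, for $x\in S\cap(0,1)$ and $y\in S$ we have $y/x\in S$, and $s_0\in S$, so $D$ really maps $S\times S$ into $S$. Hence $D(u\otimes u)$, meaning $\widetilde{D}(u\otimes u)$ for the continuous extension $\widetilde D:\beta(S\times S)\to\beta S$, is the ultrafilter
\[
D(u\otimes u)=\{A\subseteq S: D^{-1}[A]\in u\otimes u\}
\]
on $S$.

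Next, fix $\epsilon>0$ and put $A=(0,\epsilon)\cap S$. By the definition of the tensor product, $D^{-1}[A]\in u\otimes u$ exactly when
\[
\{x\in S:\{y\in S: D(x,y)\in A\}\in u\}\in u .
\]
I claim this outer set contains $S\cap(0,1)$. Take $x\in S\cap(0,1)$. Then $D(x,y)=y/x$, and $y/x\in S$ automatically, so $D(x,y)\in A$ if and only if $0<y<\epsilon x$. Therefore the inner set is $(0,\epsilon x)\cap S$. Because $\epsilon x>0$ and $u\in 0^+(S)$, this inner set belongs to $u$. The outer set thus contains $S\cap(0,1)=(0,1)\cap S$, which lies in $u$ since $u\in 0^+(S)$. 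By upward closure the outer set is in $u$, so $A\in D(u\otimes u)$. As $\epsilon$ was arbitrary, $D(u\otimes u)\in 0^+(S)$.

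The only delicate point is the default value $s_0$, which need not be small. It only matters for $x\notin(0,1)$, and that set of $x$ is $u$-null, so it never affects membership. No step should be a real obstacle; the main care needed is to check that the inner set uses the threshold $\epsilon x$, which depends on $x$, and is still positive.
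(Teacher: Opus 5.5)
Your proof is correct and follows the same route as the paper: you unfold $D^{-1}[S\cap(0,\epsilon)]\in u\otimes u$ via the definition of the tensor product and check that, for $u$-almost every $x$, the fiber lies in $u$. Your version is slightly more careful than the paper's. You restrict to $x\in S\cap(0,1)$, where $D(x,y)=y/x$, so the default value $s_0$ never matters, and you explicitly identify the inner set as $S\cap(0,\epsilon x)$, which the paper merely asserts belongs to $u$.
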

\begin{proof}
 Let $\epsilon>0$ be given.
 Now $D^{-1}[S\cap (0, \epsilon)]=\{(x, y)\in S\times S: D(x, y)\in S\cap (0, \epsilon)\}\supseteq\{(x, y)\in S\times S: \frac{y}{x}\in S\cap (0, \epsilon)\}=\{(x, y)\in S\times S: y\in x(S\cap (0, \epsilon))\}$.

 Observe that $\{(x, y)\in S\times S: y\in x(S\cap (0, \epsilon))\}\in u\otimes u$ if and only if $\{x\in S: \{y\in S: y\in x(S\cap (0, \epsilon))\}\in u\}\in u$ if and only if $\{x\in S: \{ x(S\cap (0, \epsilon))\}\in u\}\in u$. Since for each $x\in S\cap (0, 1)$, $x(S\cap (0, \epsilon))\in u$, so we have $D^{-1}[S\cap (0, \epsilon)]\in u\otimes u$. Thus $S\cap (0, \epsilon)\in D(u\otimes u)$ for all $\epsilon>0$ and so $D(u\otimes u)\in 0^+(S)$.
\end{proof}
 We denote $F<G$ for any $F, G\in P_f(\mathbb N)$ if $\max F < \min G$.
\begin{lemma}\label{4.1}
 Let $S$ be an HL semigroup and $A\subseteq S\times S$. Then the following statements are equivalent:
 \begin{enumerate}
  \item There is an idempotent $u\in 0^+(S)$ such that $A\in u\otimes u$.
  \item There is a convergent sequence $\langle{x_n}\rangle_{n=1}^{\infty}$ in $S$ such that $\langle{\frac {x_{n+1}}{x_n}}\rangle_{n=1}^{\infty}$ is  convergent and $\{(\sum_{n\in F}x_n, \sum_{n\in G}x_n): F, G\in P_f(\mathbb N) \text{ and } F<G\}\subseteq A$
 \end{enumerate}

\end{lemma}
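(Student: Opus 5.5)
The plan is to adapt the standard Galvin–Glazer type argument relating idempotents to $\FS$-sets, as in Theorem~\ref{2.6}, to the tensor product. Throughout, for $x\in S$ write $A_x=\{y\in S:(x,y)\in A\}$ and $\sigma(F)=\sum_{n\in F}x_n$. By definition, $A\in u\otimes u$ if and only if $B=\{x\in S: A_x\in u\}\in u$. For an idempotent $u$ and $E\in u$ put $E^\star=\{x\in E:-x+E\in u\}$. Recall that $E^\star\in u$, and that $x\in E^\star$ implies $-x+E^\star\in u$; this is the fact already used in the proof of Lemma~\ref{3.2}.

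For (1)$\Rightarrow$(2), I would construct $x_1,x_2,\ldots$ inductively while maintaining the following invariants for all nonempty $F,G\subseteq\{1,\ldots,n\}$ with $F<G$:
\begin{enumerate}
\item $\sigma(F)\in B^\star$;
\item $\sigma(G)\in (A_{\sigma(F)})^\star$, where the star is taken with respect to $u$.
\end{enumerate}
Invariant (1) gives $A_{\sigma(F)}\in u$, so $(A_{\sigma(F)})^\star$ makes sense. To perform the step, choose
\[x_{n+1}\in B^\star\cap\bigcap_{F}\bigl(-\sigma(F)+B^\star\bigr)\cap\bigcap_{F}(A_{\sigma(F)})^\star\cap\bigcap_{F<G}\bigl(-\sigma(G)+(A_{\sigma(F)})^\star\bigr)\cap\bigl(0,\min\{2^{-n},x_n/n\}\bigr),\]
where $F,G$ range over nonempty subsets of $\{1,\ldots,n\}$. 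This is a finite intersection of members of $u$; the last set lies in $u$ because $u\in 0^+(S)$. The new sums $\sigma(F\cup\{n+1\})$ and $\sigma(G\cup\{n+1\})$, as well as the single term $x_{n+1}$, then satisfy the invariants by the star property. The invariants give $(\sigma(F),\sigma(G))\in A$ whenever $F<G$. The size constraint forces $\sum x_n<\infty$, $x_n\to 0$, and $x_{n+1}/x_n\to 0$, so the required convergence conditions hold.

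For (2)$\Rightarrow$(1), I would first reduce to a sequence with $x_n\to0$ and $\sum x_n<\infty$. Here I read the convergence hypothesis in the near-zero sense, i.e.\ as $x_n\to 0$; if the limit were positive, no member of $0^+(S)$ could contain these sums, so this reading seems forced. Passing to a subsequence with $x_{n_{k+1}}<2^{-k}$ is harmless: sums over $F<G$ of the subsequence are sums over $F<G$ of the original sequence, and ratios of a subsequence still converge. Then set
\[T=\bigcap_{m\in\mathbb N}\overline{\FS(\langle x_n\rangle_{n=m}^\infty)}\cap 0^+(S).\]
As in Hindman–Leader's proof of Theorem~\ref{2.6}, $T$ is a nonempty compact subsemigroup of $(\beta S,+)$; nonemptiness uses the tails tending to $0$. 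By Ellis' theorem, $T$ contains an idempotent $u$. Now let $x=\sigma(F)$ with $m=\max F$. Then
\[A_x\supseteq\FS(\langle x_n\rangle_{n=m+1}^\infty)\in u,\]
so $B\supseteq\FS(\langle x_n\rangle_{n=1}^\infty)\in u$, and hence $A\in u\otimes u$.

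The main obstacle is the bookkeeping in (1)$\Rightarrow$(2). Only the \emph{sum} of $F$ is known when choosing later terms, so the star operation must be applied to the fibres $A_{\sigma(F)}$ rather than to $A$ itself, and both invariants must be carried simultaneously. A secondary point is the convergence hypotheses in (2). The ratio condition is not actually needed for (2)$\Rightarrow$(1), and in (1)$\Rightarrow$(2) it is secured by the requirement $x_{n+1}<x_n/n$.
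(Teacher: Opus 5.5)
Your argument is correct. It differs from the paper, which gives no argument and simply invokes Theorem 4.13 of \cite{sou23} in the case $m=2$.

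You prove the lemma directly. For (1)$\Rightarrow$(2) you run the $E^\star$ induction from Lemma~\ref{3.2} simultaneously on $B=\{x:A_x\in u\}$ and on the fibres $A_{\sigma(F)}$. Your two invariants are exactly what is needed, and the constraint $x_{n+1}<\min\{2^{-n},x_n/n\}$ yields $\sum x_n<\infty$ and $x_{n+1}/x_n\to 0$. For (2)$\Rightarrow$(1) you take an idempotent in the Hindman--Leader semigroup $T$ and note that $A_{\sigma(F)}\supseteq \FS(\langle x_n\rangle_{n>\max F})$.

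The citation buys brevity and the general $m$-fold statement. Your version buys transparency, and it exposes something the paper's wording hides. With ``convergent'' read literally, (2)$\Rightarrow$(1) is false: in $S=\mathbb{Q}^+$ take $x_n=1$ and $A=\mathbb{N}\times\mathbb{N}$. Then (2) holds, but $\{x:A_x\in u\}=\emptyset$ for every $u\in 0^+(S)$. So your reading $x_n\to 0$ (or $\sum x_n<\infty$) is genuinely needed. Since your (1)$\Rightarrow$(2) delivers both conditions, the lemma holds under either reading.

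Two small remarks. First, the claim that ratios of a subsequence still converge is false in general. For example, $x_n=2^{-n}$ with index gaps alternating $1,2$ gives ratios alternating $\frac12,\frac14$. You never use this claim, though. Second, the passage to a subsequence is unnecessary: $x_n\to 0$ alone gives the finite intersection property needed for $T\neq\emptyset$.
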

\begin{proof}
 This lemma is \cite[Theorem 4.13]{sou23} for $m=2$.
\end{proof}

We are now in a position to prove our main result of this section. The proof of the following theorem is adopted from the proof of \cite[Theorem 1.4]{naso}.

\begin{theorem}\label{4.3}
 Let $S$ be an HL semigroup and $u\in 0^+(S)$ be such that $u=u+u$. Then for any $C\in D(u\otimes u)$, there exist sequences $\langle{x_n}\rangle_{n=1}^{\infty}$ and $\langle{y_n}\rangle_{n=1}^{\infty}$ such that
 \begin{enumerate}
  \item $\FS (\langle{x_n}\rangle_{n=1}^{\infty})\cup \{\frac{\sum_{n\in G}x_n}{\sum_{n\in F}x_n}: F, G\in P_f(\mathbb N) \text{ and } F<G\}\subseteq C$ and
  \item $\{\sum_{l\in F}\prod_{n=k}^ly_n: F\in P_f(\mathbb N), k\leq\min F\}\subseteq C$.
\end{enumerate}
\end{theorem}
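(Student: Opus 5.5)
The plan is to unwind the hypothesis $C\in D(u\otimes u)$ through Lemma \ref{4.1}, and then obtain both patterns (1) and (2) from the single sequence it provides. Both patterns will come from re-indexing and rescaling that sequence, so that every required element becomes a quotient $\frac{\sum_{n\in G}x_n}{\sum_{n\in F}x_n}$ with $F<G$.

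\textbf{Step 1: extracting a sequence.} Since $C\in D(u\otimes u)$, we have $D^{-1}[C]\in u\otimes u$. Because $u\in 0^+(S)$, also $S\cap(0,1)\in u$, and hence $(S\cap(0,1))\times S\in u\otimes u$. Put
$$A=D^{-1}[C]\cap\big((S\cap(0,1))\times S\big)\in u\otimes u.$$
Since $u$ is an idempotent of $0^+(S)$, Lemma \ref{4.1} gives a sequence $\langle x_n\rangle_{n=1}^\infty$ in $S$ with
$$\Big(\sum_{n\in F}x_n,\sum_{n\in G}x_n\Big)\in A \quad\text{for all } F<G.$$
For such a pair, the first coordinate $\sum_{n\in F}x_n$ lies in $S\cap(0,1)$, so $D$ is given by its quotient branch and we get
$$\frac{\sum_{n\in G}x_n}{\sum_{n\in F}x_n}\in C \quad\text{whenever } F<G.$$
In particular each $x_n<1$, which is needed below for the HL closure property.

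\textbf{Step 2: pattern (1).} The sequence $x_n$ itself need not have its finite sums in $C$, since we do not know that $C\in u$. I would therefore rescale and take $w_n=x_{n+1}/x_1$, which lies in $S$ because $x_1\in S\cap(0,1)$ and $S$ is an HL semigroup.
\begin{itemize}
\item For $H\in P_f(\mathbb N)$ we have $\sum_{n\in H}w_n=\frac{\sum_{m\in H+1}x_m}{x_1}$. Since $\{1\}<H+1$, this lies in $C$, so $\FS(\langle w_n\rangle_{n=1}^\infty)\subseteq C$.
\item For $F<G$, the quotient $\frac{\sum_{n\in G}w_n}{\sum_{n\in F}w_n}$ equals $\frac{\sum_{m\in G+1}x_m}{\sum_{m\in F+1}x_m}$, because the common factor $1/x_1$ cancels. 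Since $F+1<G+1$, this lies in $C$.
\end{itemize}
So $\langle w_n\rangle$ serves as the sequence in (1); the theorem's name $x_n$ can be reassigned to it.

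\textbf{Step 3: pattern (2).} Using the original $x_n$, put $y_n=x_{n+1}/x_n$, again in $S$ by the HL property. The product telescopes: for $k\le l$ we have $\prod_{n=k}^l y_n=x_{l+1}/x_k$. Hence for $F\in P_f(\mathbb N)$ with $k\le\min F$,
$$\sum_{l\in F}\prod_{n=k}^l y_n=\frac{\sum_{m\in F+1}x_m}{x_k}.$$
Since $k\le\min F<\min(F+1)$, we have $\{k\}<F+1$, so this quotient lies in $C$ by Step 1.

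\textbf{Main obstacle.} The delicate point is Step 1: we must ensure that $D$ really acts as $y/x$ on every pair produced by Lemma \ref{4.1}, including pairs whose first coordinate is a finite sum, and that all divisions stay inside $S$. Intersecting with $(S\cap(0,1))\times S$ before invoking Lemma \ref{4.1} handles both issues. The remaining steps are just re-indexing.
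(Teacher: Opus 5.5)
Your proof is correct and follows the paper's argument. You extract a sequence from Lemma~\ref{4.1} applied to (a subset of) $D^{-1}[C]$, then rescale by $x_{n+1}/x_1$ for pattern (1) and use the telescoping ratios $x_{n+1}/x_n$ for pattern (2).

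Your intersection with $(S\cap(0,1))\times S$ is a worthwhile addition. The paper silently assumes $D$ acts as the quotient on every pair $(\sum_{n\in F}z_n,\sum_{n\in G}z_n)$. That assumption could fail if $s_0\in C$ and some first coordinate is $\ge 1$, and your intersection also guarantees the new sequences lie in $S$ by the HL property.
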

\begin{proof}
 Since $C\in D(u\otimes u)$, we have $D^{-1}[C]\in u\otimes u$. Now by Lemma \ref{4.1}, we get a convergent sequence $\langle{z_n}\rangle_{n=1}^{\infty}$ with $\langle{\frac {z_{n+1}}{z_n}}\rangle_{n=1}^{\infty}$ is  convergent such that whenever $F, G\in P_f(\mathbb N) \text{ and } F<G$, we have $(\sum_{n\in F}z_n, \sum_{n\in G}z_n)\in D^{-1}[C]$ so $D(\sum_{n\in F}z_n, \sum_{n\in G}z_n)=\frac{\sum_{n\in G}z_n}{\sum_{n\in F}z_n}\in C$.

 Now set $x_n=\frac{z_{n+1}}{z_1}$ and $y_n=\frac{z_{n+1}}{z_n}$.

 Then both the sequences $\langle{x_n}\rangle_{n=1}^{\infty}$ and $\langle{y_n}\rangle_{n=1}^{\infty}$ are convergent. Also for every $F, G\in P_f(\mathbb N)$, we have $\sum_{n\in F}x_n=\frac{\sum_{n\in F}z_{n+1}}{z_1}$ and
 \\$\frac{\sum_{n\in G}x_{n}}{\sum_{n\in F}x_{n}}=\frac{(\frac{1}{z_1})\sum_{n\in G}z_{n+1}}{(\frac{1}{z_1})\sum_{n\in F}z_{n+1}}=\frac{\sum_{n\in G}z_{n+1}}{\sum_{n\in F}z_{n+1}}$.

 It follows that $\FS (\langle{x_n}\rangle_{n=1}^{\infty})\cup \{\frac{\sum_{n\in G}x_n}{\sum_{n\in F}x_n}: F, G\in P_f(\mathbb N) \text{ and } F<G\}\subseteq C$.

 Moreover, if $F\in P_f(\mathbb N)$ and $k\leq\min F$, we analogously have \\
 $\sum_{l\in F}\prod_{n=k}^ly_n=\sum_{l\in F}\prod_{n=k}^l\frac{z_{n+1}}{z_n}=\sum_{l\in F}\frac{z_{l+1}}{z_k}=\frac{\sum_{l\in F}z_{l+1}}{z_k}\in C$.
\end{proof}
\begin{corollary}
Let $S$ be an HL semigroup. Then for every finite coloring of $S=\bigcup_{i=1}^{r}C_{i}$$, r\in \mathbb{N}$, and $\epsilon>0$, there are $m\leq r$ and convergent sequences $\langle{x_n}\rangle_{n=1}^{\infty}$ and $\langle{y_n}\rangle_{n=1}^{\infty}$ such that \begin{enumerate}
  \item $\FS (\langle{x_n}\rangle_{n=1}^{\infty})\cup \{\frac{\sum_{n\in G}x_n}{\sum_{n\in F}x_n}: F, G\in P_f(\mathbb N) \text{ and } F<G\}\subseteq C\cap (0, \epsilon)$ and
  \item $\{\sum_{l\in F}\prod_{n=k}^ly_n: F\in P_f(\mathbb N), k\leq\min F\}\subseteq C\cap (0, \epsilon)$.
\end{enumerate}
\end{corollary}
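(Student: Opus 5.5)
The plan is to derive the corollary from Theorem \ref{4.3}. The key point is that $D(u\otimes u)$ belongs to $0^+(S)$ by Lemma \ref{4.2}, so it is an ultrafilter containing every $S\cap(0,\epsilon)$. We read $C$ in the statement as the colour class $C_m$, for the index $m\le r$ produced below.

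First fix an idempotent $u\in 0^+(S)$, i.e.\ $u=u+u$. Such an idempotent exists because $0^+(S)$ is a compact Hausdorff right topological semigroup, as recalled in Section 2. Put $v=D(u\otimes u)$; Lemma \ref{4.2} gives $v\in 0^+(S)$. Since $S=\bigcup_{i=1}^r C_i\in v$ and $v$ is an ultrafilter, some $C_m\in v$ with $m\le r$. Given $\epsilon>0$, we also have $S\cap(0,\epsilon)\in v$ by the definition of $0^+(S)$. Hence
\[
C_m\cap(0,\epsilon)\in v=D(u\otimes u).
\]

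Next apply Theorem \ref{4.3} with $C=C_m\cap(0,\epsilon)$. This produces sequences $\langle x_n\rangle$ and $\langle y_n\rangle$ satisfying (1) and (2) with $C_m\cap(0,\epsilon)$ in place of $C$, which are exactly the required inclusions.

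Convergence of the two sequences is the one point that needs care. In the proof of Theorem \ref{4.3} we have $x_n=z_{n+1}/z_1$ and $y_n=z_{n+1}/z_n$, where $\langle z_n\rangle$ and $\langle z_{n+1}/z_n\rangle$ are convergent by Lemma \ref{4.1}. So $\langle x_n\rangle$ converges as a constant multiple of a convergent sequence, and $\langle y_n\rangle$ converges directly by Lemma \ref{4.1}(2). Thus nothing beyond the earlier results is needed, and no step is a genuine obstacle.
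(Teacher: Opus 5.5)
Your proof is correct and follows the paper's argument: take $u$, use Lemma~\ref{4.2} to get $D(u\otimes u)\in 0^+(S)$, pick a colour class $C_m$ with $C_m\cap(0,\epsilon)\in D(u\otimes u)$, and apply Theorem~\ref{4.3}. You are slightly more careful than the paper, which writes ``Take $u\in 0^+(S)$'' without requiring $u=u+u$ even though Theorem~\ref{4.3} needs it, and which leaves the convergence of $\langle x_n\rangle$ and $\langle y_n\rangle$ to the proof of that theorem.
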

\begin{proof}
 Take $u\in 0^+(S)$. Then by Lemma \ref{4.2}, $D(u\otimes u)\in 0^+(S)$. Therefore, choose $m<r$ such that $C_m\cap (0, \epsilon)\in D(u\otimes u)$. Now use Theorem \ref{4.3} to conclude the required results.
\end{proof}
 The following corollary shows that the pattern $x, y, x+y, \frac{y}{x}$ is partition regular near zero.
\begin{corollary}\label{4.4}
Let $S$ be an HL semigroup. Then for every finite coloring of $S=\bigcup_{i=1}^{r}C_{i}$$, r\in \mathbb{N}$, and $\epsilon>0$, there exists  $m< r$, $x, y\in S$ such that $\{x, y, x+y, \frac{y}{x}\}\subseteq C_m\cap (0, \epsilon)$.
\end{corollary}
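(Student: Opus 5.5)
The plan is to derive Corollary \ref{4.4} directly from Theorem \ref{4.3}, applied to an idempotent of $0^+(S)$, in three steps.

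\emph{Step 1 (choosing the ultrafilter and the colour).} Since $0^+(S)$ is a compact Hausdorff right topological semigroup, it contains an idempotent; fix $u\in 0^+(S)$ with $u=u+u$. By Lemma \ref{4.2}, $D(u\otimes u)\in 0^+(S)$, so $S\cap(0,\epsilon)\in D(u\otimes u)$. Since $S=\bigcup_{i=1}^r C_i$ and $D(u\otimes u)$ is an ultrafilter, exactly one index $m\in\{1,\dots,r\}$ satisfies $C:=C_m\cap(0,\epsilon)\in D(u\otimes u)$. (The statement says $m<r$; I read this as $m\le r$.)

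\emph{Step 2 (extracting the configuration).} Apply Theorem \ref{4.3}(1) to $C$. This gives a sequence $\langle x_n\rangle_{n=1}^\infty$ such that $\FS(\langle x_n\rangle_{n=1}^{\infty})\subseteq C$ and every quotient $\frac{\sum_{n\in G}x_n}{\sum_{n\in F}x_n}$ with $F<G$ lies in $C$. The task is to find $x,y$ so that $x$, $y$, $x+y$ are finite sums and $y/x$ is such a quotient with $F<G$. Take
\[
x=x_1,\qquad y=x_2 .
\]
Then $x+y=x_1+x_2$ corresponds to the index set $\{1,2\}$, so $x,y,x+y\in\FS(\langle x_n\rangle_{n=1}^{\infty})\subseteq C$. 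Moreover $y/x=x_2/x_1$ is the quotient with $F=\{1\}<G=\{2\}$, so it also lies in $C$. Hence $\{x,y,x+y,\frac yx\}\subseteq C_m\cap(0,\epsilon)$.

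\emph{Step 3 (checking membership in $S$).} All elements obtained must lie in $S$, with $x\in S\cap(0,1)$ so that $D$ is given by the quotient formula. This is built into the proof of Theorem \ref{4.3}. There, Lemma \ref{4.1} yields pairs in $D^{-1}[C]$, and $C\subseteq(0,\epsilon)$ is disjoint from $\{s_0\}$ once $\epsilon\le s_0$. Shrinking $\epsilon$ if necessary therefore ensures that the first coordinate is in $(0,1)$ and the quotient formula applies. Membership of the $x_n=z_{n+1}/z_1$ in $S$ follows from the HL property, since $x/y\in S$ for $y\in S\cap(0,1)$.

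I expect Step 3 to be the only delicate point, namely making sure the degenerate value $s_0$ of $D$ cannot contaminate the conclusion. Choosing $\epsilon<\min\{1,s_0\}$ before picking $m$ removes this issue. The remaining work is bookkeeping of the index sets $F<G$.
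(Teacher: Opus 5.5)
Your proof is correct and follows the paper's route. You take an idempotent $u\in 0^+(S)$, use Lemma \ref{4.2} to place some $C_m\cap(0,\epsilon)$ in $D(u\otimes u)$, apply Theorem \ref{4.3}(1), and read off $x=x_1$, $y=x_2$ with $F=\{1\}<G=\{2\}$. Your Step 3 observation, that $s_0\notin C$ forces every pair in $D^{-1}[C]$ to have first coordinate in $S\cap(0,1)$, validly patches a point the paper's proof of Theorem \ref{4.3} passes over silently. The same goes for your insistence that $u$ be idempotent, which the paper's preceding corollary omits.
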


Note that Goswami's Theorem near zero follows from Corollary \ref{4.4}, by taking $a=x,  b=\frac{y}{x}$.

 In 2011, Hindman proved that for every finite coloring of $\mathbb N$, there must exist $a, b, c$, and $d$ in $\mathbb N$ such that $a, b, c$, and $d$ are all
distinct and that the color of $a+b=cd$ is that same as that of $a, b, c$, and $d$ \cite{hind11}. In 2020, Patra and Shaikh proved this result in near zero \cite{sou20}. In 2017, J. Moreira  prove that for every finite coloring of $\mathbb N$, there exist $a, b\in \mathbb N$ such that the set $\{a, a\cdot b, a+b\}$  is monochromatic \cite{mor2017}. Now we end this work with the following question.

\begin{question}
 Let $S$ be an HL semigroup. For any finite coloring of $S$, there exist $a, b\in S$ such that the set $\{a, a\cdot b, a+b\}$ is monochromatic near zero.
\end{question}

\end{document}